\newtheorem{theorem}{Theorem}[section]
\newtheorem{proposition}[theorem]{Proposition}
\newtheorem{corollary}[theorem]{Corollary}
\theoremstyle{definition}
\newtheorem{definition}[theorem]{Definition}
\newtheorem{remark}[theorem]{Remark}
\newtheorem{problem}[theorem]{Problem}
\numberwithin{equation}{section}
\numberwithin{figure}{section}
\newcommand\Cscr{\mathscr{C}}
\newcommand\B{\mathbb{B}}
\newcommand\C{\mathbb{C}}
\newcommand\D{\overline{\mathbb D}}
\newcommand\CP{\mathbb{CP}}
\renewcommand\D{\mathbb D}
\newcommand\R{\mathbb{R}}
\newcommand\RP{\mathbb{RP}}
\newcommand\Z{\mathbb{Z}}
\newcommand\igot{\mathfrak{i}}
\renewcommand\igot{\mathfrak{i}}
\renewcommand\imath{\igot}
\newcommand\di{\partial}
\newcommand\dist{\mathrm{dist}}
\renewcommand\span{\mathrm{span}}
\newcommand\tr{\mathrm{tr}}
\newcommand\Hess{\mathrm{Hess}}
\newcommand\CH{\mathrm{CH}}
\def\dist{\mathrm{dist}}
\def\span{\mathrm{span}}
\numberwithin{equation}{section}
\begin{document}
\title{Domains without parabolic minimal submanifolds and \\ weakly hyperbolic domains} 
\author{Franc Forstneri{\v c}}

\address{Franc Forstneri\v c, Faculty of Mathematics and Physics, University of Ljubljana, and Institute of Mathematics, Physics, and Mechanics, Jadranska 19, 1000 Ljubljana, Slovenia}
\email{franc.forstneric@fmf.uni-lj.si}

\thanks{Supported by the European Union (ERC Advanced grant HPDR, 101053085
to F.\ Forstneri\v c) and by grants P1-0291, J1-3005, and N1-0237 from ARRS, 
Republic of Slovenia.}

\subjclass[2010]{Primary 53A10. Secondary 32Q45}

\date{1 June 2023}

\keywords{minimal surface, $m$-plurisubharmonic function, hyperbolic domain}

\begin{abstract}
We show that if $\Omega$ is an $m$-convex domain in $\R^n$ for some
$2\le m<n$ whose boundary $b\Omega$ has a tubular neighbourhood of positive radius
and is not $m$-flat near infinity, then $\Omega$ does not contain any immersed 
parabolic minimal submanifolds of dimension $\ge m$.
In particular, if $M$ is a properly embedded nonflat minimal hypersurface in $\R^n$
with a tubular neighbourhood of positive radius then every immersed parabolic 
hypersurface in $\R^n$ intersects $M$. In dimension $n=3$ this holds if $M$ has 
bounded Gaussian curvature function.
We also introduce the class of weakly hyperbolic domains $\Omega$ in $\R^n$,  
characterised by the property that every conformal harmonic map $\C\to\Omega$ 
is constant, and we elucidate their relationship with hyperbolic domains, and domains 
without parabolic minimal surfaces.
\end{abstract}

\maketitle

%
%
%
%
\section{Introduction}\label{sec:intro}
This paper is motivated by two closely related lines of developments in the theory 
of minimal surfaces. The first one is the circle of results known as halfspace theorems. 
The second one is the recently introduced hyperbolicity theory for minimal surfaces.  

Concerning the first topic, Xavier \cite{Xavier1984} proved in 1984 that the convex hull 
of a complete nonflat minimal surface in $\R^3$ with bounded Gaussian curvature 
equals $\R^3$. 
The {\em Strong Halfspace Theorem} of Hoffman and Meeks 
\cite[Theorem 2]{HoffmanMeeks1990IM} says that any two proper minimal surfaces 
in $\R^3$ intersect, unless they are parallel planes.  
By Pacelli Bessa, Jorge, and Oliveira-Filho \cite{BessaJorgeOliveira2001} 
and Rosenberg \cite{Rosenberg2001}, the same conclusion holds for a pair of complete immersed minimal surfaces in $\R^3$ with bounded curvature; 
such surfaces need not be proper in $\R^3$ (see Andrade \cite{Andrade2000PAMS}) 
unless they are embedded (see Meeks and Rosenberg 
\cite[Theorem 2.1]{MeeksRosenberg2008JDG}).
Closer to the topic of this paper, Pacelli Bessa, Jorge, and Pessoa 
proved in \cite{BessaJorgePessoa2021} that an immersed parabolic minimal surface 
in $\R^3$ intersects every complete immersed nonflat minimal surface with bounded curvature. Related developments 
concern 
the maximum principle at infinity; see the surveys by Meeks and P\'erez 
\cite{MeeksPerez2004SDG,MeeksPerez2012Survey}   
and the recent paper by Gama et al.\ \cite{GamaLiraMariMedeiros2022}. 

In this paper we find geometric conditions on the boundary of 
a domain $\Omega$ in $\R^n$ for $n\ge 3$ implying that $\Omega$ does not contain any 
immersed (not necessarily proper or complete) parabolic minimal submanifolds of 
a given dimension; see Theorem \ref{th:main}.  In particular, we obtain a halfspace 
theorem for a pair of minimal hypersurfaces in $\R^n$, one of which is parabolic 
and the other one is properly embedded  and has a tubular neighbourhood 
of positive radius; see Corollary \ref{cor:main}.
When $n=3$, this  coincides with a special case of the aforementioned result 
of Pacelli Bessa et al.\ \cite[Theorem 1.1]{BessaJorgePessoa2021}; 
see Corollary \ref{cor:main3}. 

Parabolicity of an open Riemannian manifold is important 
from the point of view of potential theory.
Determining whether such a manifold is parabolic or hyperbolic 
is a classical question, the so-called type problem. The survey by Grigor'yan \cite{Grigoryan1999BAMS} is a standard reference. The extrinsic case, considering the type problem for a submanifold of a Riemannian manifold, is natural and interesting as well, in particular when the ambient manifold is a Euclidean space. The case of minimal surfaces in 
$\R^n$ has been widely studied. For minimal submanifolds of dimension higher than $2$ there is not much literature, the reason probably being that every complete minimal submanifold of dimension at least $3$ in a Euclidean space is hyperbolic; see 
Markvorsen and Palmer \cite[Theorem 2.1]{MarkvorsenPalmer2003}. 

The absence of parabolic minimal submanifolds in a given domain indicates 
that the domain is small or tight for minimal submanifolds. Another measure of 
smallness is hyperbolicity (as measured by minimal surfaces), 
a notion introduced in the recent paper
by Forstneri\v c and Kalaj \cite{ForstnericKalaj2021}, and developed further 
by Drinovec Drnov\v sek and Forstneri\v c \cite{DrinovecForstnericPAMQ}. 
A domain $\Omega\subset\R^n$ is hyperbolic if for every point $p\in \Omega$ 
there are a neighbourhood $U\subset\Omega$ of $p$ and a constant $c_p<\infty$ 
such that every conformal harmonic disc $f:\D=\{z\in \C:|z|<1\}\to\Omega$ with 
$f(0)\in U$ satisfies $\|df_0\|\le c_p$. This is a close analogue of Kobayashi hyperbolicity 
in complex analysis; see Kobayashi \cite{Kobayashi1967,Kobayashi2005}. 
In Section \ref{sec:weakhyp} we introduce the class of 
{\em weakly hyperbolic domains} --- 
domains $\Omega\subset\R^n$ $(n\ge3)$ without nonconstant conformal harmonic maps 
$\C\to \Omega$. Weak hyperbolicity is an analogue of Brody hyperbolicity,  
which excludes nonconstant holomorphic maps from $\C$ to a given 
complex manifold. A hyperbolic domain is also weakly hyperbolic;
the converse holds on convex domains (see Proposition \ref{prop:convex}) 
but fails in general. In fact, we show that the class of weakly hyperbolic domains 
properly contains the other two mentioned classes,
and there is a nonhyperbolic domain in $\R^3$ without parabolic minimal surfaces
(see Proposition \ref{prop:relationship}). On the other hand, it remains an open problem 
whether there exists a hyperbolic domain containing a parabolic minimal surface.

%
%
\section{Excluding parabolic minimal submanifolds}\label{sec:main}
Let $m\ge 2$ be an integer. A  connected $m$-dimensional Riemannian manifold $(R,g)$ 
is said to be {\em parabolic} if every negative subharmonic function on $R$ 
is constant. A connected manifold immersed into a Euclidean space $\R^n$
is said to be parabolic if it is parabolic in the metric induced from the Euclidean metric on $\R^n$
by the immersion. On a surface $R$, parabolicity only depends on the conformal class of the metric.
Every compact conformal surface punctured at finitely many points is parabolic;
an example is $\C=\CP^1\setminus\{\infty\}$. See Grigor$'$yan \cite{Grigoryan1999BAMS}
for more information. The following result is proved in Section \ref{sec:proof}.  

%
%
\begin{theorem}\label{th:main}
Let $2\le m<n$ be integers. Assume that $\Omega$ is a domain in $\R^n$ 
with $\Cscr^2$ boundary $M=b\Omega$ whose principal curvatures 
$\nu_1\le \nu_2\le \cdots\le \nu_{n-1}$ satisfy $\nu_1+\nu_2+\cdots+\nu_m\ge 0$ 
at every point, the set of $m$-flat points $\{p\in M:\nu_j(p)=0\ \text{for}\ j=1,\ldots, m\}$ 
has bounded interior in $M$, 
and there is an $\epsilon>0$ such that every point in the $\epsilon$-neighbourhood 
of $M$ has a unique nearest point in $M$. Then, $\Omega$ does not contain any 
parabolic immersed minimal submanifolds of dimension $\ge m$. In particular, 
if these conditions hold for $m=2$ then every conformal harmonic map $R\to \Omega$ 
from a parabolic conformal surface $R$ is constant. 
\end{theorem}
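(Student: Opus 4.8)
The plan is to argue by contradiction: suppose $\phi:R\to\Omega$ is a nonconstant conformal harmonic immersion (or, more generally, $\phi:R\to\Omega$ an immersed parabolic minimal submanifold of dimension $k\ge m$), and derive a violation of parabolicity. The mechanism is the classical one behind halfspace-type theorems: the signed distance function to $M=b\Omega$ is, near $M$, a smooth function whose restriction to any minimal submanifold of dimension $\ge m$ is subharmonic, by the curvature hypothesis $\nu_1+\cdots+\nu_m\ge0$. Composing with a suitable concave increasing function then yields a bounded subharmonic function on $R$ that is nonconstant, contradicting parabolicity.

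First I would set up the distance function. Let $\epsilon>0$ be the tubular radius from the hypothesis, let $\Omega_\epsilon=\{x\in\Omega:\dist(x,M)<\epsilon\}$, and on this set let $\delta(x)=\dist(x,M)$, which is $\Cscr^2$ there because the nearest-point projection is unique and $M$ is $\Cscr^2$. At a point $x\in\Omega_\epsilon$ with nearest point $p\in M$ and inward unit normal $\nu$, one computes $\nabla\delta(x)=\nu$ and the eigenvalues of $\Hess\,\delta(x)$ in the principal directions of $M$ at $p$: they are $\frac{-\nu_j(p)}{1-\delta(x)\nu_j(p)}$ for $j=1,\dots,n-1$ (with the sign convention making $\Omega$ locally $\{\delta>0\}$), together with a zero eigenvalue in the normal direction. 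The key point for the estimate: if $\phi$ is a minimal immersion of dimension $k\ge m$, then the Laplacian of $\delta\circ\phi$ on $R$ equals the trace of $\Hess\,\delta$ restricted to the tangent $k$-plane $T$ of $\phi$, i.e. $\sum_{i=1}^k\langle\Hess\,\delta\,(e_i),e_i\rangle$ for an orthonormal frame $e_i$ of $T$ (the mean-curvature term drops out by minimality). Because $\delta$ is \emph{convex} in the normal direction (zero Hessian there), and because for a symmetric bilinear form the minimum of its trace over $k$-dimensional subspaces is the sum of its $k$ smallest eigenvalues, one gets
\begin{equation*}
\Delta_R(\delta\circ\phi) \;=\; \tr_T(\Hess\,\delta) \;\le\; \sum_{j=1}^{k}\lambda_j \;\le\; \sum_{j=1}^{m}\lambda_j \;=\; \sum_{j=1}^m \frac{-\nu_j(p)}{1-\delta\,\nu_j(p)} \;\le\; 0
\end{equation*}
whenever $\sum_{j=1}^m\nu_j(p)\ge0$, where $\lambda_1\le\cdots$ are the eigenvalues of $\Hess\,\delta$ restricted to $T$; here I use that $\delta\nu_j<1$ on $\Omega_\epsilon$ (shrinking $\epsilon$ if necessary so that $\epsilon\sup|\nu_j|<1$) so each denominator is positive, and that the negative eigenvalues cost more, as one wants the $m$ with most-negative values, which are exactly those with the largest $\nu_j$. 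So $-\delta\circ\phi$ is subharmonic on $\phi^{-1}(\Omega_\epsilon)$; but this is only a local statement on $R$, so one must still globalize.

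The globalization is where the bounded-interior condition on the $m$-flat set enters, and I expect this to be the main obstacle. If $\phi(R)$ stays inside $\Omega_\epsilon$ we can extend: pick $\eta\in(0,\epsilon)$ and replace $\delta$ by $u=F(\delta)$ for a smooth concave increasing $F:(0,\epsilon)\to\R$ that is affine near $0$ and has $F(t)\to\sup$ as $t\to\eta$, extended by that supremum where $\delta\ge\eta$; concavity of $F$ preserves the sign of the Laplacian where $\delta<\eta$ (since $\Delta(F(\delta\circ\phi))=F'(\delta)\Delta(\delta\circ\phi)+F''(\delta)|\nabla(\delta\circ\phi)|^2$ with both terms $\le0$) and makes $u\circ\phi$ a globally defined bounded subharmonic function on $R$, hence constant by parabolicity, hence $\delta\circ\phi$ is constant and $\phi(R)$ lies on a level set of $\delta$, which is impossible for a minimal submanifold unless it is $m$-flat there — and by the bounded-interior hypothesis on $\{p:\nu_1(p)=\cdots=\nu_m(p)=0\}$ this forces $\phi$ constant (a parabolic submanifold cannot be a noncompact subset of a bounded flat piece; more carefully, a level set of $\delta$ over a bounded region of $M$ is bounded, and a complete-metric argument or the parabolicity itself rules out such $\phi$). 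The remaining case, when $\phi(R)$ is not contained in any $\Omega_\epsilon$, is handled by translating $M$: since $\Omega$ has a tube of radius $\epsilon$ and $M$ is not $m$-flat near infinity, for a generic direction one slides a translate $M+tv$ until it first touches $\phi(R)$, producing a point where $\phi(R)$ is tangent to a level set of the distance to the translated hypersurface from the correct side, and then the same subharmonicity comparison — now comparing $\delta\circ\phi$ against a constant at an interior maximum — contradicts the strong maximum principle unless $\phi(R)$ lies in that level set, returning us to the previous situation. One must check that the tangency occurs at a non-$m$-flat point of the translate, which is exactly guaranteed by the boundedness of the $m$-flat interior. Assembling these pieces gives the theorem; the $m=2$ statement is the special case where "parabolic immersed minimal submanifold of dimension $\ge2$" reads "conformal harmonic map from a parabolic conformal surface", since conformal harmonic maps are precisely (branched) minimal immersions and parabolicity of a surface depends only on the conformal class.
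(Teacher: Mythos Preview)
Your central claim---that $\delta\circ\phi$ is superharmonic on $R$ (equivalently, that the distance function has $\tr_T\Hess_\delta\le 0$ on every tangent $k$-plane $T$)---is false, and this is precisely the difficulty the paper's proof is built around. The problem is the zero eigenvalue of $\Hess_\delta$ in the direction of $\nabla\delta$. Take $m=2$ and a boundary point where $\nu_1<0<\nu_2$ with $\nu_1+\nu_2\ge 0$. In your sign convention the eigenvalues of $\Hess_\delta$ are $-\nu_j/(1-\delta\nu_j)$ together with $0$; the two \emph{largest} are $-\nu_1/(1-\delta\nu_1)>0$ and $0$, whose sum is strictly positive. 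So for a $2$-plane $T$ spanned by the $\nu_1$-principal direction and the normal, $\tr_T\Hess_\delta>0$, and a minimal disc tangent to $T$ has $\Delta_R(\delta\circ\phi)>0$ there. Your displayed chain of inequalities also has the directions reversed: the minimum of $\tr_T$ over $k$-planes equals the sum of the $k$ smallest eigenvalues, which gives a \emph{lower} bound on $\tr_T$, not the upper bound you need.

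The paper fixes this by composing with a function $h$ that is \emph{strictly convex} (in their sign convention where $\delta<0$; strictly concave in yours) with second derivative at $0$ exceeding $(m-1)/\epsilon$, a bound derived from the positive-reach hypothesis via the curvature estimate $\sum_{\nu_j\le0}\nu_j\ge -(m-1)/\epsilon$. This pushes the normal eigenvalue far enough to make $h\circ\delta$ genuinely $m$-plurisubharmonic on a collar. Your choice of $F$ ``affine near $0$'' leaves the normal eigenvalue at zero near $b\Omega$, so the same counterexample defeats $F(\delta)$ there. Your final sliding argument is also different from, and vaguer than, the paper's: they show $\dist(f(R),b\Omega)\ge\epsilon_1$, translate $\Omega$ by $\epsilon_1/2$, rerun the same estimate to get $\dist\ge 3\epsilon_1/2$, and iterate to a contradiction---no interior tangency point is invoked, which is important since $R$ is noncompact and need not attain a minimum of $\delta$.
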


The minimal submanifolds in the theorem are not assumed to be 
complete or proper. The hypothesis on the set of $m$-flat points of $b\Omega$
cannot be omitted unless $3\le m=n-1$. 
A counterexample is a halfspace $H$ of $\R^n$, which contains 
parabolic minimal submanifolds of dimensions $m\in\{2,\ldots,n-2\}$, or $m=2$ if $n=3$,
contained in hyperplanes parallel to $bH$. (Note that $\R^m$ with the flat metric
is parabolic for $m=2$, but is hyperbolic if $m\ge 3$, carrying the negative subharmonic 
function $-1/|x|^{n-2}$.) When $m=2$, the minimal surfaces in the theorem 
are allowed to have isolated branch points, 
and in this case the result is essentially optimal since many open conformal surfaces 
of hyperbolic type admit nonconstant bounded (and even complete)
conformal harmonic maps to $\R^3$.
This holds in particular for any bordered conformal surface; 
see \cite[Chapter 7]{AlarconForstnericLopez2021} for a survey of this topic. 

A domain $\Omega\subset \R^n$ with $\Cscr^2$ boundary 
whose principal curvatures $\nu_1\le \nu_2\le \cdots\le \nu_{n-1}$ from the inner side 
satisfy $\nu_1+\nu_2+\cdots+\nu_m \ge 0$ at every point of $b\Omega$ 
is said to be {\em $m$-convex}, and if $m=2$ it is also called {\em minimally convex}; 
see \cite[Definition 8.1.9]{AlarconForstnericLopez2021} or Definition \ref{def:m-flat}. 
By \cite[Theorem 8.1.13]{AlarconForstnericLopez2021} this holds 
if and only if there exist a neighbourhood $U\subset \R^n$ of $b\Omega$ and a $\Cscr^2$ function
$\rho: U\to \R$ such that  $\Omega\cap U=\{\rho<0\}$,  $d\rho\ne 0$ on $b\Omega\cap U=\{\rho=0\}$, and
\begin{equation}\label{eq:trace-bD}
	\tr_\Lambda \Hess_\rho(x) \ge 0 \ \ \text{for every point $x\in b\Omega$ and $m$-plane
	$\Lambda \subset T_x b\Omega$}.
\end{equation}
Here, $\tr_\Lambda \Hess_\rho(x)$ denotes the trace of the restriction to $\Lambda$ of the Hessian form of $\rho$ at $x$. 
It was shown in \cite{Forstneric2022BSM} that a bounded $m$-convex domain with $\Cscr^2$ boundary in $\R^n$
admits an $m$-plurisubharmonic defining function,
i.e., one satisfying condition \eqref{eq:trace-bD} for every point $x\in \overline \Omega$ and $m$-plane 
$\Lambda\subset \R^n$ (see \eqref{eq:trace}). The main new point shown in this paper 
is that an unbounded domain $\Omega$ as in Theorem \ref{th:main} admits a defining function 
which is $m$-plurisubharmonic on $\overline \Omega$, 
obtained by convexifying the signed distance function to $b\Omega$; 
see Proposition \ref{prop:main}. This is the key fact of independent interest 
used in the proof of Theorem \ref{th:main}.

Note that $m$-convex domains play a major role in the theory of minimal submanifolds. 
In particular, every bounded minimally convex domain contains many properly immersed 
minimal surfaces parameterized by an arbitrary bordered conformal surface;  
see \cite[Theorems 8.3.1 and 8.3.4]{AlarconForstnericLopez2021} for the orientable case and
\cite[Theorem 6.9]{AlarconForstnericLopezMAMS} for the nonorientable one.
Such domains also form barriers for minimal submanifolds; see 
Jorge and Tomi \cite{JorgeTomi2003} and Gama et al.\ \cite{GamaLiraMariMedeiros2022}, and 
the references therein. In \cite{GamaLiraMariMedeiros2022} the authors proved the 
maximum principle at infinity in a very general context including parabolic minimal $m$-varifolds in $m$-convex domains.    
Nevertheless, I was unable to deduce Theorem \ref{th:main} from \cite[Theorem 1.1]{GamaLiraMariMedeiros2022} 
which holds under weaker hypotheses on $b\Omega$ but seemingly stronger hypotheses on the 
minimal submanifolds. 

The case of particular interest is when the boundary $M=b\Omega$ satisfies $\nu_1+\cdots+\nu_{n-1}=0$, so 
it is a minimal hypersurface. If $M$ is not a hyperplane then 
the set of its $(n-1)$-flat points has empty interior, and we obtain the following corollary to Theorem \ref{th:main}.

%
%
\begin{corollary}\label{cor:main}
Assume that $M$ is a properly embedded minimal hypersurface in $\R^n$ for $n\ge 3$ 
such that for some $\epsilon>0$ every point in the $\epsilon$-neighbourhood 
of $M$ has a unique nearest point in $M$. Then every immersed parabolic minimal 
hypersurface $R\to\R^n$ intersects $M$, unless $n=3$, $M$ is a plane in $\R^3$,
and the image of $R$ is contained in a plane parallel to $M$.
\end{corollary}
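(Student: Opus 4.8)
The plan is to reduce the statement to Theorem~\ref{th:main} applied to the component of $\R^n\setminus M$ containing the given hypersurface. So suppose $f\colon R\to\R^n$ is an immersed parabolic minimal hypersurface with $f(R)\cap M=\emptyset$. Since $R$ is connected and $f$ is continuous, $f(R)$ lies in a single connected component $\Omega$ of the open set $\R^n\setminus M$. The first thing to verify is that $\Omega$ is a domain whose boundary is all of $M$: indeed $b\Omega$ is closed in $\R^n$, hence closed in $M$, and it is also open in $M$, because near any boundary point $p$ the hypersurface $M$ splits a small ball into two sides, $\Omega$ contains at least one of them, and then the whole piece of $M$ in that ball lies in $b\Omega$; since $M$ is connected and $b\Omega\neq\emptyset$ this forces $b\Omega=M$. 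Consequently $\Omega$ has $\Cscr^2$ (in fact real-analytic) boundary $M$; the principal curvatures of $M$ with respect to the inner normal of $\Omega$ sum to the mean curvature of $M$, which vanishes, so $\Omega$ is $(n-1)$-convex; and the hypothesis that points within $\epsilon$ of $M$ have a unique nearest point in $M$ is precisely the tubular-neighbourhood hypothesis of Theorem~\ref{th:main} for $b\Omega=M$.

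I would then split according to $n$. For $n\ge 4$ we are in the range $3\le m=n-1$, where Theorem~\ref{th:main} holds with no hypothesis on the $m$-flat points (as noted right after its statement); applying it to $\Omega$ with $m=n-1$ shows that $\Omega$ contains no immersed parabolic minimal submanifold of dimension $\ge n-1$, contradicting the existence of $f$. Hence for $n\ge 4$ every immersed parabolic minimal hypersurface meets $M$, with no exception. One can also avoid invoking that strengthening: in the only remaining subcase the argument of the next paragraph puts $f(R)$ inside an affine hyperplane $\Pi'\cong\R^{n-1}$, so $f$ becomes a local diffeomorphism onto an open set; pulling back via $f$ the function $-|x-x_0|^{-(n-3)}$ for a suitable $x_0\in\R^{n-1}$ yields a nonconstant negative subharmonic function on $R$ (harmonic off the discrete fibre over $x_0$), contradicting parabolicity.

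For $n=3$ I would use the hypothesis on flat points. The set of $2$-flat points of $M$ is the locus where the second fundamental form of $M$ vanishes; if this locus had nonempty interior in $M$, then, minimal hypersurfaces being real-analytic and $M$ connected, the second fundamental form would vanish identically and $M$ would be an affine plane. Thus, if $M$ is not a plane, this locus has empty (hence bounded) interior, Theorem~\ref{th:main} with $m=2$ applies to $\Omega$, and it again contradicts the existence of the parabolic immersed minimal surface $f\colon R\to\Omega$. If instead $M$ is a plane $\Pi$, then $\Omega$ is an open halfspace $H$; taking the affine function $\ell$ that vanishes on $\Pi$ and is positive on $H$, the composition $\ell\circ f$ is a harmonic function on $R$ (affine functions pull back to harmonic functions under any, possibly branched, minimal immersion) which is strictly positive, hence constant because $R$ is parabolic; therefore $f(R)$ lies in a plane parallel to $M$. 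This is exactly the exceptional configuration allowed in the statement, and it genuinely occurs (for instance $R=\R^2$ mapped isometrically onto such a plane), so the result is sharp.

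The one step that needs care is the topological claim $b\Omega=M$ together with the verification that $\Omega$ inherits all the hypotheses of Theorem~\ref{th:main}; once that is in place, the rest is a short case distinction resting on three classical facts: minimal hypersurfaces are real-analytic, coordinate (hence affine) functions are harmonic along minimal submanifolds, and positive harmonic functions on parabolic manifolds are constant.
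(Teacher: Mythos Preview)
Your proof is correct and follows the paper's approach: apply Theorem~\ref{th:main} with $m=n-1$ to the component of $\R^n\setminus M$ containing $f(R)$, after checking via real-analyticity that a non-planar minimal hypersurface has empty $(n-1)$-flat interior. The paper's own derivation is the single sentence preceding the corollary; your version supplies the omitted topological and analytic details, and your Green's-function argument for the hyperplane case when $n\ge 4$ makes rigorous what the paper relegates to the parenthetical remark after Theorem~\ref{th:main} (that $\R^{n-1}$ carries the negative subharmonic function $-1/|x|^{n-3}$ for $n\ge 4$).
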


The condition in Theorem \ref{th:main} and Corollary \ref{th:main},
that the $\Cscr^2$ hypersurface $M=b\Omega$ admit a tubular neighbourhood 
of positive radius $\epsilon>0$, is nontrivial when $\Omega$ is unbounded, 
which is the only case of interest. If this holds, one says that $M$ has 
{\em positive reach}, a terminology introduced by Federer \cite[Sect.\ 4]{Federer1959}. 
The {\em reach} of $M$ is the supremum of the numbers $\epsilon>0$ having this property.
A $\Cscr^2$ hypersurface with positive reach has bounded principal curvatures
(see the proof of Proposition \ref{prop:main}). The converse holds in several cases 
of interest. Federer proved \cite[Lemma 4.11]{Federer1959} that a graph in $\R^n$ 
over a domain in $\R^{n-1}$ with sufficiently nice boundary, such that the gradient 
of the graphing function is Lipschitz, has positive reach. Every compact piece of an 
embedded $\Cscr^2$ hypersurface has positive reach.

If $M$ is a complete embedded minimal surface in $\R^3$ 
of finite total Gaussian curvature, then every end of $M$ is a graph over the complement 
of a disc in $\R^2$ whose graphing function has bounded second order partial derivatives, 
and the ends are well separated (see Jorge and Meeks \cite{JorgeMeeks1983T}). 
Hence, every such surface has positive reach. 
It was shown by Meeks and Rosenberg \cite[Theorem 5.3]{MeeksRosenberg2008JDG}
(see also \cite[Corollary 2.6.6]{MeeksPerez2012Survey}) 
that a complete embedded minimal surface with bounded curvature in $\R^3$ is proper and
has positive reach. This gives the following corollary to Theorem \ref{th:main},
originally due to Pacelli Bessa, Jorge, and Pessoa 
\cite[Theorem 1.1]{BessaJorgePessoa2021}.

%
%
\begin{corollary}\label{cor:main3}
The image of a nonconstant conformal harmonic map $R \to\R^3$ 
(possibly with branch points) from a parabolic conformal surface 
intersects every properly embedded nonflat minimal surface 
$M$ in $\R^3$ with bounded Gaussian curvature. 
\end{corollary}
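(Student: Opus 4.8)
The plan is to deduce Corollary \ref{cor:main3} from Theorem \ref{th:main} (equivalently from Corollary \ref{cor:main}) by verifying that a properly embedded nonflat minimal surface $M$ in $\R^3$ with bounded Gaussian curvature satisfies all the hypotheses of Corollary \ref{cor:main}. First, I would recall the result of Meeks and Rosenberg \cite[Theorem 5.3]{MeeksRosenberg2008JDG}, cited in the excerpt, that a complete embedded minimal surface with bounded curvature in $\R^3$ is automatically proper and has positive reach; hence there is an $\epsilon>0$ such that every point of the $\epsilon$-neighbourhood of $M$ has a unique nearest point in $M$. Since $M$ is properly embedded in $\R^3$, it separates $\R^3$ into two components (or is one side of such a separation on each of its components); pick one component and call it $\Omega$, so that $M=b\Omega$ is a $\Cscr^2$ (indeed real-analytic) minimal hypersurface. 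With $n=3$ and $m=2$, the $m$-convexity condition $\nu_1+\nu_2\ge 0$ holds with equality because $M$ is minimal, and the $m$-flat points are exactly the flat (umbilic with zero curvature) points of $M$; since $M$ is nonflat and real-analytic, the set of flat points is a proper analytic subset, hence has empty interior in $M$ and in particular bounded (empty) interior.

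With these verifications in place, Theorem \ref{th:main} applies to $\Omega$ in the case $m=2$, $n=3$: every conformal harmonic map $R\to\Omega$ from a parabolic conformal surface $R$ is constant, including maps with isolated branch points as permitted by the theorem. Equivalently, by Corollary \ref{cor:main}, every immersed parabolic minimal hypersurface in $\R^3$ intersects $M$, and the exceptional case in that corollary (where $M$ is a plane and the image of $R$ lies in a parallel plane) does not occur here because $M$ is assumed nonflat. Therefore the image of any nonconstant conformal harmonic map $R\to\R^3$ from a parabolic conformal surface cannot be contained in either closed component of $\R^3\setminus M$ with $M$ excluded; since these two closed components together cover $\R^3$, a nonconstant such map cannot avoid $M$. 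Running this argument on each connected component of the complement (there may be more than one if $M$ is disconnected) shows the image of $R$ must meet $M$.

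The only subtlety to handle carefully is the passage from ``$M=b\Omega$ for a domain $\Omega$'' to the separation statement: I would note that a properly embedded hypersurface in $\R^3$ need not globally separate if it is noncompact, but it is always locally two-sided (minimal surfaces in $\R^3$ are orientable when embedded, or one can work with the orientation double cover of a tubular neighbourhood), and more to the point the conclusion of Theorem \ref{th:main} is local in the target in the following sense: if $f:R\to\R^3$ is a nonconstant conformal harmonic map whose image misses $M$, then each connected component of $R$ maps into a single connected component $\Omega$ of $\R^3\setminus M$, and $\Omega$ has $b\Omega\subset M$; the hypotheses of Theorem \ref{th:main} (positive reach of $b\Omega$, the curvature inequality, and boundedness of the interior of the flat locus) are inherited from $M$, so the theorem forces $f$ to be constant on that component. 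The main (and essentially only) obstacle is thus bookkeeping about connectedness and two-sidedness of $M$; the geometric heavy lifting — convexifying the signed distance function and the parabolicity argument — is already packaged in Theorem \ref{th:main} and Proposition \ref{prop:main}, and the positivity of the reach is supplied by the quoted theorem of Meeks and Rosenberg.
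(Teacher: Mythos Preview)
Your proposal is correct and follows essentially the same route as the paper: invoke Meeks--Rosenberg \cite[Theorem 5.3]{MeeksRosenberg2008JDG} to obtain positive reach, observe that for a nonflat minimal hypersurface the $2$-flat locus has empty interior, and then apply Theorem \ref{th:main} (equivalently Corollary \ref{cor:main}) with $m=2$, $n=3$ to each component of $\R^3\setminus M$. One small correction: your worry that ``a properly embedded hypersurface in $\R^3$ need not globally separate if it is noncompact'' is unfounded---a properly embedded connected hypersurface in $\R^n$ always separates $\R^n$ into two components (as the paper itself notes before \eqref{eq:SD}), so the bookkeeping about two-sidedness and local separation is unnecessary.
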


%
%
\begin{remark}
In \cite[Theorem 1.1]{BessaJorgePessoa2021}, Corollary \ref{cor:main3}
is proved under the weaker assumption that $M$ is a complete nonflat immersed 
minimal surface in $\R^3$ of bounded curvature.
(Unlike in the case when $M$ is embedded, such a surface need not be proper 
in $\R^3$ as shown by Andrade \cite{Andrade2000PAMS}.) Their proof for the 
immersed case is more involved. I thank G.\ Pacelli Bessa and L.\ F.\ Pessoa 
for having pointed out their work after an earlier version of this preprint was posted. \end{remark}

If one imposes stronger conditions on a minimal surface in a domain $\Omega\subset\R^3$, 
then the conclusion of Corollary \ref{cor:main3} holds under weaker conditions on $b\Omega$. For example, if $\Omega\subsetneq \R^3$ is a smoothly bounded 
minimally convex domain and $R\subset \Omega$ is a complete immersed minimal surface
of finite total Gaussian curvature (hence proper in $\R^3$), then $R$ is a plane and $\Omega$ is a slab or a halfspace 
(see \cite[Theorem 1.16]{AlarconDrinovecForstnericLopez2019TAMS}). 
This also follows from the results in \cite{GamaLiraMariMedeiros2022}.

Corollary \ref{cor:main3} applies to every complete embedded nonflat minimal surface 
$M$ in $\R^3$ of finite total Gaussian curvature, as well as to many minimal surfaces 
of infinite total curvature such as the standard helicoid, the helicoids of positive genera constructed 
by Hoffman, Traizet, and White \cite{HoffmanTraizetWhite2016}, and Riemann's minimal surfaces
\cite{MeeksPerez2016}. It was shown by Meeks, P\'erez, and Ros \cite[Theorem 1]{MeeksPerezRos2004IM} 
that every properly embedded minimal surface in $\R^3$ of finite genus has bounded curvature,
so Corollary \ref{cor:main3} holds for such surfaces. Furthermore, 
every periodic properly embedded minimal surface in $\R^3$ whose fundamental domain 
is of finite topological type satisfies Corollary \ref{cor:main3} by results of 
Meeks and Rosenberg \cite{MeeksRosenberg1996CMH}; examples include Scherk's surfaces.
We refer to Meeks and P\'erez \cite{MeeksPerez2012Survey} for more information on this topic.

%
%
Corollary \ref{cor:main3} gives a partial negative answer to 
\cite[Problem 1.16]{DrinovecForstneric2023JMAA}, 
asking whether the complement of a nonflat properly embedded minimal surface in $\R^3$ 
contains a minimal surface parameterized by $\C$. The case of unbounded curvature 
(and hence necessarily of infinite genus by Meeks, P\'erez and Ros
\cite[Theorem 1]{MeeksPerezRos2004IM}) remains open:

\begin{problem}\label{prob:DDF}
Let $M$ be a properly embedded minimal surface in $\R^3$ with unbounded curvature. 
Is there a nonconstant conformal harmonic map $\C\to\R^3$ whose image avoids $M$?
\end{problem}

\section{Proof of Theorem \ref{th:main}}\label{sec:proof}
We begin with preliminaries. Recall that a map $f=(f_1,\ldots,f_n):U\to \R^n$ from an 
open set $U\subset \C$ is said to be conformal if it preserves angles at every immersion 
point; equivalently, if $z=x+\imath y$ is a complex coordinate on $U$ then the partial 
derivatives of $f$ satisfy $f_x\,\cdotp f_y=0$ and $|f_x|=|f_y|$. 
Here, the dot denotes the Euclidean inner product and $|\,\cdotp|$ the Euclidean length.
These conditions imply that the rank of $f$ at any point is either two (an immersion point) 
or zero (a branch point). 
The analogous definition applies to maps from any conformal surface in local isothermal 
coordinates. The image of a nonconstant conformal harmonic map from an open 
conformal surface is a minimal surface with isolated branch points; conversely, every 
minimal surface with isolated branch points is of this form. 
For background, see e.g. the monographs \cite{AlarconForstnericLopez2021,ColdingMinicozzi1999,Osserman1986} 
and the surveys \cite{AlarconForstneric2019JAMS,MeeksPerez2011,MeeksPerez2012Survey}.

%
%
Given a subset $M$ of $\R^n$, we denote by $\dist(\cdotp,M)$ the Euclidean distance 
function to $M$:
\[	
	\dist(x,M)=\inf\{|x-p|:p\in M\},\quad x\in\R^n.
\]
For $p\in\R^n$ and $\epsilon>0$ we let $\B(p,\epsilon)=\{x\in\R^n:|x-p|<\epsilon\}$ and 
\begin{equation}\label{eq:Vepsilon}
	V_\epsilon(M)=\bigcup_{p\in M} \B(p,\epsilon)= \left\{x\in \R^n: \dist(x,M)<\epsilon\right\}\!.
\end{equation}

Let $M$ be a properly embedded connected hypersurface in $\R^n$. Such $M$ is 
orientable and its complement $\R^n\setminus M=M^+\cup M^-$ consists of a pair of connected domains.
The signed distance function to $M$ is defined by 
\begin{equation}\label{eq:SD}
	\delta_M(x) = \begin{cases} \phantom{-} \dist(x,M), & \text{if}\ x\in M_+ \cup M; \\
					   	    -\dist(x,M), & \text{if}\ x\in M_-.
			\end{cases}
\end{equation}
If $V\subset \R^n$ is an open neighbourhood of $M$ such that every point $x\in V$ 
has a unique nearest point $p=\xi(x)\in M$, and if $M$ is of class $\Cscr^r$ for some 
$r\in \{2,3,\ldots,\infty,\omega\}$, then $\delta_M$ is also of class $\Cscr^r$ on $V$
(see Gilbarg and Trudinger \cite[Lemma 14.16]{GilbargTrudinger1983} or Krantz and Park \cite{KrantzPark1981}). 

We recall the following notion; see 
\cite[Sections 8.1.4--8.1.5]{AlarconForstnericLopez2021}.

%
%
\begin{definition}\label{def:m-flat}
An oriented embedded $\Cscr^2$ hypersurface $M\subset \R^n$ is {\em $m$-convex} 
for some $m\in \{1,2,\ldots,n-1\}$ if its principal curvatures 
$\nu_1\le \nu_2\le \cdots \le \nu_{n-1}$ satisfy
\begin{equation}\label{eq:m-convex}
		\nu_1(p)+\nu_2(p)+\cdots +\nu_m(p)\ge 0\ \ \text{for all $p\in M$}.
\end{equation}
The hypersurface $M$ is {\em strongly $m$-convex} at $p\in M$ if strong inequality holds in \eqref{eq:m-convex}. 
A point $p\in M$ is {\em $m$-flat} if $\nu_j(p)=0$ for $j=1,\ldots,m$. We say that 
{\em $M$ is not $m$-flat at infinity} if the set $\{p\in M: \nu_j(p)=0\ \text{for}\ j=1,\ldots,m\}$
has bounded relative interior in $M$.
\end{definition}

Given a $\Cscr^2$ function $\rho:\Omega\to \R$ on a domain $\Omega\subset \R^n$,
we denote by $\Hess_\rho(x)$ the Hessian of $\rho$ at the point $x\in \Omega$, 
i.e., the quadratic form on $T_x\R^n=\R^n$ represented by the matrix 
$\big(\frac{\di^2\rho}{\di x_i\di x_j}(x)\big)$ of second order partial derivatives of 
$\rho$ at $x$. For $1\le m<n$ let $G_m(\R^n)$ denote the Grassman manifold 
of $m$-planes in $\R^n$. Given $\Lambda\in G_m(\R^n)$, we denote by 
$\tr_\Lambda \Hess_\rho(x)\in\R$ the trace of the restriction of the Hessian 
$\Hess_\rho(x)$ to $\Lambda$. A $\Cscr^2$ function $\rho:\Omega\to\R$ is 
said to be {\em $m$-plurisubharmonic} if
\begin{equation}\label{eq:trace}
	\tr_\Lambda \Hess_\rho(x) \ge 0\ \ \text{for all $x\in \Omega$ and $\Lambda\in G_m(\R^n)$}.
\end{equation}
If $\lambda_1(x) \le\lambda_2(x)\le\cdots \le \lambda_n(x)$ are the eigenvalues of  
$\Hess_\rho(x)$, then \eqref{eq:trace} is equivalent to
\[
	\lambda_1(x)+\cdots+\lambda_m(x) \ge 0 \ \ \text{for every}\ x\in \Omega.
\]
In fact, 
$
	\lambda_1(x)+\cdots+\lambda_m(x) = \inf_\Lambda \tr_\Lambda \Hess_\rho(x).
$
(See \cite[Sect.\ 8.1]{AlarconForstnericLopez2021} or \cite{HarveyLawson2013IUMJ}.) 
The main point is that a function $\rho:\Omega\to\R$ is 
$m$-plurisubharmonic if and only if its restriction 
to every $m$-dimensional minimal submanifold in $\Omega$ is a subharmonic function 
on the submanifold (see \cite[Proposition 8.1.2]{AlarconForstnericLopez2021}). 
%
%
The notion of an $m$-plurisubarmonic function extends to upper semicontinuous functions 
by asking that for any $m$-dimensional affine subspace $L$ of $\R^n$, 
the restriction of the function to $L\cap \Omega$ is a subharmonic function
(see \cite[Definition 8.1.1]{AlarconForstnericLopez2021}). Such a function can be 
approximated from above on any relatively compact subdomain of $\Omega$
by smooth $m$-plurisubarmonic functions 
(see \cite[Proposition 8.1.6]{AlarconForstnericLopez2021}). In this paper, we shall
use continuous $m$-plurisubharmonic functions obtained by taking the maximum
of a smooth $m$-plurisubharmonic function and a constant, and in this case, 
smoothing can be obtained globally in a simple way; see the last part of the
proof of Proposition \ref{prop:main}.

%
%

Theorem \ref{th:main} is a consequence of the following proposition of independent 
interest. 

%
%
\begin{proposition}\label{prop:main}
Let $\Omega \subset \R^n$ be a domain with $\Cscr^r$ boundary for
some $r\ge 2$ satisfying the assumptions in Theorem \ref{th:main}. 
There is a function $\rho: \overline \Omega\to (-\infty,0]$ 
of class $\Cscr^r(\overline \Omega)$ which is $m$-plurisubharmonic
on $\Omega$, it vanishes on $M=b\Omega$, 
it satisfies $d\rho\ne 0$ on $\rho^{-1}((-1,0])$, and for every $t\in (-1,0)$  
the level set $M_t=\{\rho=t\}$ coincides with a level set of the signed distance 
function $\delta_M$ \eqref{eq:SD}, $M_t$ is $m$-convex, 
it is strongly $m$-convex at every $m$-nonflat point, and $M_t$ 
is not $m$-flat near infinity.
\end{proposition}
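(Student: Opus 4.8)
The plan is to start from the signed distance function $\delta_M$ and compose it with a suitable convex increasing function $h$ of one real variable, so that $\rho = h\circ \delta_M$. First I would record the basic differential geometry of the signed distance function on the tubular neighbourhood $V_\epsilon(M)$: for a point $x$ at signed distance $t=\delta_M(x)$ from its unique nearest point $p=\xi(x)\in M$, the eigenvalues of $\Hess_{\delta_M}(x)$ are $0$ (in the normal direction $\nabla\delta_M$) together with the $n-1$ numbers $\nu_j(p)/(1+t\,\nu_j(p))$, where $\nu_1(p)\le\cdots\le\nu_{n-1}(p)$ are the principal curvatures of $M$ at $p$ with the inner-normal orientation. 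This is the standard tube formula (see Gilbarg–Trudinger \cite[Lemma 14.17]{GilbargTrudinger1983}); here the positive-reach hypothesis guarantees $1+t\,\nu_j(p)>0$ for $|t|<\epsilon$ and also, crucially, that the principal curvatures are bounded, say $|\nu_j|\le C$ on all of $M$ — otherwise unit normal segments of length $\epsilon$ would cross, contradicting unique nearest points. A short computation then shows that for a plane $\Lambda\subset T_x\R^n$, the sum of the $m$ smallest eigenvalues of $\Hess_\rho(x)$ with $\rho=h\circ\delta_M$ equals $h''(t)\,|\text{(tangential part)}|^2$ plus $h'(t)$ times a sum of $m$ of the numbers $\nu_j(p)/(1+t\nu_j(p))$, and the worst case is controlled by $h'(t)\big(\sum_{j\le m}\nu_j(p)\big)/(1+tC)$ when the $\nu_j$ are negative, or simply by $h'(t)$ times the $m$ smallest curvature terms.

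Next I would choose $h:(-\infty,0]\to(-\infty,0]$: take $h$ to be a $\Cscr^\infty$ (indeed one can do this with an explicit formula) convex, nondecreasing function with $h(0)=0$, $h'>0$ on $(-1,0]$, $h\equiv$ const on $(-\infty,-1]$, and with $h''/h'$ large enough on $[-\epsilon,0]$ to dominate the (bounded) curvature terms. Concretely, since $m$-convexity of $M$ gives $\nu_1(p)+\cdots+\nu_m(p)\ge 0$, the $m$-plurisubharmonicity inequality $\tr_\Lambda\Hess_\rho\ge 0$ reduces to an estimate of the form $h''(t)\,s^2 \ge h'(t)\cdot(\text{bounded error})\cdot s^2$ where $s$ is the length of the tangential component of an orthonormal frame for $\Lambda$ — and where the error comes only from the denominators $1+t\nu_j$ being $\ne 1$, so it is $O(|t|)$ and in particular bounded by a constant $K$ on $[-\epsilon,0]$. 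Thus it suffices that $h''\ge K h'$ on $[-\epsilon,0]$, e.g. $h'(t)=e^{Kt}\cdot(\text{bump})$, suitably normalized and then glued to a constant for $t\le -\epsilon$ (on $[-1,-\epsilon]$ one just needs $h$ convex nondecreasing, which is automatic for the glued profile, since there $\delta_M$ need not be smooth but one only uses $\rho\equiv$ const there — actually care is needed: outside $V_\epsilon(M)$ the function $\delta_M$ is merely Lipschitz, so I would arrange $h$ to be \emph{constant} on $(-\infty,-\epsilon']$ for some $\epsilon'<\epsilon$, making $\rho$ equal to that constant on $\Omega\setminus V_{\epsilon'}(M)$, where it is trivially smooth and $m$-plurisubharmonic). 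After rescaling $t\mapsto t/\epsilon'$ inside $h$ one may assume the gluing happens before $t=-1$ so that the statement "$d\rho\ne 0$ on $\rho^{-1}((-1,0])$" holds, since $\nabla\rho = h'(\delta_M)\nabla\delta_M$ and $|\nabla\delta_M|=1$ on $V_\epsilon(M)$.

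The level-set claims then come essentially for free: for $t\in(-1,0)$ the set $M_t=\{\rho=t\}$ equals $\{\delta_M = h^{-1}(t)\}$, a level set of $\delta_M$, hence a $\Cscr^r$ hypersurface parallel to $M$ at distance $h^{-1}(t)\in(-\epsilon',0)$; its principal curvatures are $\nu_j(p)/(1+h^{-1}(t)\nu_j(p))$, and since the map $\nu\mapsto \nu/(1+a\nu)$ is increasing and preserves sign for $|a|$ small, $m$-convexity of $M$ (i.e. $\sum_{j\le m}\nu_j(p)\ge0$ at every $p$) passes to $M_t$ after noting that the $m$ smallest curvatures of $M_t$ correspond to the $m$ smallest $\nu_j(p)$; moreover if $p$ is \emph{not} $m$-flat, i.e. some $\nu_j(p)\ne0$ with $j\le m$, then at least one of the transformed curvatures is nonzero while their $m$-sum stays $\ge0$, which gives strong $m$-convexity of $M_t$ at the corresponding point; and the $m$-flat points of $M_t$ are exactly the images under $p\mapsto$ (normal push-off) of the $m$-flat points of $M$, so their relative interior in $M_t$ is bounded precisely because it is bounded in $M$ by hypothesis. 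Finally $\rho\le0$ on $\overline\Omega$ with $\rho=0$ on $M$ and $\rho<0$ on $\Omega$ follows from $h\le0$, $h(0)=0$, $h<0$ on $(-\infty,0)$, together with $\delta_M\le 0$ on $\overline\Omega$.

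The main obstacle I expect is the junction/regularity issue: $\delta_M$ is only $\Cscr^r$ on the tube $V_\epsilon(M)$, so one must ensure $\rho=h\circ\delta_M$ extends as a genuinely $\Cscr^r$ function on all of $\overline\Omega$ and remains $m$-plurisubharmonic across the "seam" where $h$ becomes constant; the clean way is to make $h$ locally constant near and below $-\epsilon'$ so that $\rho$ is literally constant (hence smooth and trivially $m$-psh) on the non-tubular part and the two pieces agree to infinite order along $\{\delta_M=-\epsilon'\}$. The second delicate point is purely quantitative — verifying that the bounded-curvature bound $C$ (forced by positive reach) really does let one pick a single convexifying profile $h$ working uniformly over all of $M$, including its ends; this is where the "tubular neighbourhood of positive radius" hypothesis does its essential work, and it is the step I would write out most carefully. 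The remark that smoothing a max-type $m$-psh function can be done globally is not needed here since the construction already yields a $\Cscr^r$ function directly.
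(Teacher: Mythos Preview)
Your approach is the paper's: convexify the signed distance via $\rho=h\circ\delta_M$ with $h$ convex increasing, use the tube formula together with the positive-reach curvature bound to verify $m$-plurisubharmonicity, then flatten to a constant away from $M$. Your one-step construction (taking $h$ smooth, constant with all derivatives vanishing on $(-\infty,-\epsilon']$) is slightly more direct than the paper's two-step version, which first forms $\rho_0=\max\{h\circ\delta,\,h(-\epsilon_0)\}$ and then smooths by post-composing with a second convex increasing function $\chi$; both work.

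One step needs correction. You write that the ``error comes only from the denominators $1+t\nu_j$ being $\ne 1$, so it is $O(|t|)$''. This misdiagnoses why $\delta_M$ fails to be $m$-plurisubharmonic: the failure is already present at $t=0$, because the normal eigenvalue of $\Hess_{\delta_M}$ is $0$, so an $m$-plane containing the normal direction together with $m-1$ negative-curvature tangential directions has trace $\nu_1(p)+\cdots+\nu_{m-1}(p)$, which is typically negative. Your tangential-component argument with ``$s^2$'' does not handle this either, since the tangential projections of an orthonormal frame for $\Lambda$ need not be orthonormal. The correct, and uniform, bound is that the sum of the negative $\nu_j(x)$ is $\ge -(m-1)/\epsilon$ on $C_{\epsilon/2}$, a consequence of $m$-convexity combined with the positive-reach bound $\nu_j\le 1/\epsilon$; this yields $K=(m-1)/\epsilon$ in your condition $h''\ge Kh'$. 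The paper does this via the clean eigenvalue argument: the eigenvalues of $\Hess_{h\circ\delta}$ are $h'(t)\nu_1(x),\ldots,h'(t)\nu_{n-1}(x),\,h''(t)$, and one checks directly that the sum of any $m$ of them is nonnegative. (A related slip: your argument for strong $m$-convexity of $M_t$---``one transformed curvature nonzero and $m$-sum $\ge 0$''---does not give strict inequality; use instead that for $t<0$ each nonzero $\nu_j$ strictly increases under $\nu\mapsto\nu/(1+t\nu)$, so $\sum_{j\le m}\nu_j(x)>\sum_{j\le m}\nu_j(p)\ge 0$ unless $p$ is $m$-flat.)
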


We point out that, for the proof of Theorem \ref{th:main}, 
it suffices to find a continuous $m$-plurisubharmonic function 
$\rho:\overline \Omega\to (-\infty,0]$ which is of class $\Cscr^2$ on an
interior collar around $M=b\Omega$ and has the other stated properties.
Such a function is given by \eqref{eq:rho0}. 

%
%
\begin{proof} 
By Gilbarg and Trudinger \cite[Lemma 14.16]{GilbargTrudinger1983} 
or Krantz and Parks \cite{KrantzPark1981}, the conditions on 
$M=b\Omega$ imply that the signed distance function 
$\delta=\delta_M$ to $M$ \eqref{eq:SD} is 
of class $\Cscr^r$ on the $\epsilon$-neighbourhood $V_\epsilon(M)$ of $M$
(see \eqref{eq:Vepsilon}). We choose the sign so that $\delta<0$ on 
\begin{equation}\label{eq:Cepsilon}
	C_\epsilon:=\Omega\cap V_\epsilon(M)= \{x\in \Omega: \dist(x,M)<\epsilon\}.
\end{equation}
We recall some further properties of $\delta$, referring to 
Bellettini \cite[Theorem 1.18, p.\ 14]{Bellettini2013} 
and Gilbarg and Trudinger \cite[Section 14.6]{GilbargTrudinger1983}.
There is a projection $\xi:V_\epsilon(M)\to M$ of class $\Cscr^{r-1}$ such that for every 
$x\in V_\epsilon(M)$ the point $p=\xi(x)\in M$ is the unique nearest point to $x$ in $M$. 
The gradient $\nabla \delta$ has constant norm $|\nabla \delta|=1$ on $V_\epsilon(M)$, 
and it has constant value on the intersection of $V_\epsilon(M)$ with the normal line 
$N_p=p+\R\,\cdotp \nabla \delta(p)$ at $p\in b\Omega=M$. There an orthonormal 
basis $(v_1,\ldots,v_{n-1},v_n=\nabla \delta(p))$ of $\R^n$ 
such that the vectors $v_1,\cdots,v_{n-1}$ span $T_pM$, 
and in this basis the (symmetric) matrix $A(p)$ of $\Hess_\delta(p)$ is diagonal:
\begin{equation}\label{eq:Ap}
	A(p) v_j=\nu_j v_j\ \ \text{for $j=1,\ldots, n-1$}; \quad A(p)v_n=0, 
\end{equation}
where the numbers $\nu_1\le \nu_2\le \cdots \le \nu_{n-1}$ 
are the principal normal curvatures of $M$ at $p$ from the interior side. 
For any point $x\in N_p \cap V_\epsilon(M)$ the same basis $(v_1,\ldots,v_n)$ 
diagonalizes $\Hess_\delta(x)$, with the corresponding eigenvalues 
\begin{equation}\label{eq:nux}
	\nu_j(x)=\frac{\nu_j}{1 + \delta(x) \nu_j}\ \ \text{for $j=1,\ldots, n-1$}; 
	\quad A(p)v_n=0.
\end{equation}
If $\nu_j>0$ for some $j$ then the above shows that $1+\delta(x)\nu_j>0$ for all $x\in C_\epsilon\cap N_p$.
Since $\delta(x)$ assumes all values  in $(-\epsilon,0)$ on the set $C_\epsilon$ \eqref{eq:Cepsilon}, this 
implies $1-\epsilon\nu_j\ge 0$ and hence $\nu_j\le 1/\epsilon$ for all such $j$. From this
and the hypothesis $\nu_1+\nu_2+\cdots+\nu_m\ge 0$  
it also follows that $\nu_j\ge -(m-1)/\epsilon$ for every $j=1,\ldots,n-1$. 
Summarising, we have that
\begin{equation}\label{eq:bounds}
	-(m-1)/\epsilon \le \nu_j(p) \le 1/\epsilon\ \ \text{for all $j=1,\ldots,n-1$ and $p\in M$}.
\end{equation}
That is, the hypersurface $M=b\Omega$ has bounded principal curvatures. 
The worst case estimate in the first inequality in \eqref{eq:bounds} occurs only when 
$\nu_1=-(m-1)/\epsilon$ and $\nu_j=1/\epsilon$ for $j=2,\ldots,m$, so we also conclude that
\begin{equation}\label{eq:lowerbound}
	\sum_{\nu_j(p)\le 0}  \nu_j(p) \ge -\frac{m-1}{\epsilon}\ \ \text{for all $p\in M$.}
\end{equation}
Consider the family of domains 
\[
	\Omega_t=\{x\in C_\epsilon:\delta(x)<t\}\cup (\Omega\setminus C_\epsilon)\quad
	\text{for $t\in (-\epsilon,0]$.}
\] 
As $t$ increases towards $0$, the domains $\Omega_t$ increase to $\Omega_0=\Omega$. 
The tangent space to the hypersurface $b\Omega_t=\{\delta=t\}$ 
at the point $x=p+t \nabla\delta(p)$ is spanned by the same vectors $v_1,\ldots,v_{n-1}$ 
as $T_p M$ (see \eqref{eq:Ap}), and the numbers $\nu_j(x)$ in \eqref{eq:nux} for 
$j=1,\ldots,n-1$ are the principal curvatures of $b\Omega_t$ at $x$. As $\delta(x)$ 
decreases (we move away from $M$), each of the curvatures $\nu_j(x)$ strictly increases 
unless $\nu_j(p)=0$ (in whihc case $\nu_j(x)=0$), and it does not change the sign. 
This implies that for every $x=p+t \nabla\delta(p) \in b\Omega_t$ with 
$t\in (-\epsilon,0)$ we have 
\begin{eqnarray}
	\label{eq:ordered}
	& \nu_1(x)\le \nu_2(x)\le \cdots \le \nu_{n-1}(x), \ \ \nu_j(x) \ge \nu_j(p)\ \ 
	\text{for $j=1,\ldots,n-1$},\ \ \text{and} \\
        \label{eq:Hge0}
	& H(x):=\nu_1(x)+\nu_2(x)+\cdots+\nu_m(x)\ge \nu_1(p)+\nu_2(p)+\cdots+\nu_m(p)\ge 0,
\end{eqnarray}
where the first inequality in \eqref{eq:Hge0} is equality if and only if the point 
$p\in M$ is $m$-flat (see Definition \ref{def:m-flat}). 
Indeed, the function $H$ increases as we move away from 
$M=b\Omega$ into $\Omega$, and it vanishes
on $N_p\cap C_\epsilon$ if and only if the point $p\in M$ is $m$-flat. 
We conclude that for $t\in (-\epsilon,0)$ the hypersurface $M_t=b\Omega_t$ is $m$-convex, 
and it is strongly $m$-convex at $x=p+t\nabla \delta(p)\in C_\epsilon$ if and only if the 
point $p=\xi(x)\in M$ is not $m$-flat. In particular, since $M$ is not $m$-flat near infinity
by theassumption, $M_t$ is not $m$-flat near infinity for any $t\in (-\epsilon, 0)$.

From \eqref{eq:nux}--\eqref{eq:ordered} it follows that
for every $x\in C_{\epsilon/2}$ (see \eqref{eq:Cepsilon}) we have that
\begin{eqnarray}\label{eq:nuxest}
	& \displaystyle{-\frac{m-1}{\epsilon} \le \nu_j(x) \le \frac{2}{\epsilon} \ \ \text{for $j=1,\ldots,n-1$, and}} \\
	\label{eq:lowerboundatx}
	& \displaystyle{ \sum_{\nu_j(x) \le 0}  \nu_j(x) \ge -\frac{m-1}{\epsilon}.} 
\end{eqnarray}

Despite the estimate \eqref{eq:Hge0}, the function $\delta$ need not be 
$m$-plurisubharmonic on $C_\epsilon$ due the zero eigenvalue of $\Hess\, \delta$ 
in the normal direction determined by $\nabla \delta$. 
However, we now show that a suitable convexification of $\delta$ on a collar 
$C_{\epsilon_0} \subset\Omega$ in \eqref{eq:Cepsilon} for some 
$0<\epsilon_0<\epsilon/2$ extends to an $m$-plurisubharmonic function 
$\rho:\Omega\to (-\infty,0)$ such that every level set 
\[
	M_t=\{\rho=t\}\ \ \text{for $t\in (-\epsilon_0,0)$}
\]
coincides with a level set $\{\delta=\tau\}$ for some $\tau=\tau(t) \in (-\epsilon, 0)$.
The argument is similar to  \cite[proof of Theorem 1.1]{Forstneric2022BSM},
but we are now dealing with an unbounded domain $\Omega$,
and we obtain uniform estimates of the quantities involved. 

Choose a number $\alpha > (m-1)/\epsilon$. 
Let $h:\R\to\R$ be a smooth, convex, increasing function with 
$h(0)=0$, $\dot h(0)=1$, and $\ddot h(0)=\alpha$. Then, 
\begin{equation}\label{eq:doth}
	h(t)<0\ \ \text{and}\ \ 0 \le \dot h(t)<1\ \ \text{for}\ \ -\infty<t<0. 
\end{equation}
Choose numbers $\epsilon_0,\epsilon'_0$ with 
$0< \epsilon_0<\epsilon'_0<\epsilon/2$ such that 
\begin{equation}\label{eq:ddoth}
	\ddot h(t) > \frac{m-1}{\epsilon} \ \ \text{for $-\epsilon'_0 \le t\le 0$}.
\end{equation}
Consider the function $h\circ \delta: C_\epsilon \to (-\infty,0)$. We have that  
\begin{equation}\label{eq:Hesscomp}
	\Hess_{h\circ \delta} = 
	(\dot h\circ \delta) \Hess_{\delta} 
	+ (\ddot h\circ \delta) \nabla \delta\, \cdotp (\nabla \delta)^T.
\end{equation}
(Here, $(\nabla \delta)^T$ is the transpose of $\nabla \delta$, and 
$\nabla \delta\, \cdotp (\nabla \delta)^T$ is an $n\times n$ matrix.) 
Recall that for $p\in M$ the orthonormal vectors $v_1,\cdots v_{n-1},v_n=\nabla\delta(p)$ 
diagonalize $\Hess_\delta(p)$, where $T_p M=\span\{v_1,v_2,\ldots,v_{n-1}\}$. 
Note that $T_pM$ lies in the kernel of the matrix $\nabla \delta\, \cdotp (\nabla \delta)^T$ 
while $v_n$ is an eigenvector with eigenvalue $1$.  The same basis then diagonalizes 
$\Hess_{h\circ \delta}(x)$ at every point $x=p+ \delta(x) v_n \in N_p\cap C_\epsilon$, 
the eigenvalues corresponding to $v_1,\ldots,v_{n-1}$ get multiplied by 
$\dot h(\delta(x))\ge 0$, and the eigenvalue in the normal direction $v_n$ equals 
$\ddot h(\delta(x))$. Summarising, the eigenvalues of $\Hess_{h\circ \delta}(x)$ 
at any point $x\in C_\epsilon$ equal
\begin{equation}\label{eq:eigenvalues-x}
	\dot h(\delta(x)) \nu_1(x),\ldots, \dot h(\delta(x)) \nu_{n-1}(x),\quad \ddot h(\delta(x)).
\end{equation}
If $x\in C_{\epsilon'_0}$ then $-\epsilon'_0<\delta(x)<0$, and hence by 
\eqref{eq:lowerboundatx}, \eqref{eq:doth} and \eqref{eq:ddoth} we have that
\[
	  \sum_{\nu_j(x)\le 0} \dot h(\delta(x)) \nu_j(x) + \ddot h(\delta(x)) >0.
\]
Together with \eqref{eq:ordered} and \eqref{eq:Hge0} it follows that 
the sum of any $m$ numbers in the list \eqref{eq:eigenvalues-x} is nonnegative 
at every point $x\in C_{\epsilon'_0}$, i.e., the function $h\circ \delta$ is 
$m$-plurisubharmonic on $C_{\epsilon'_0}$. Hence, the continuous function 
$\rho_0:\Omega\to [h(-\epsilon_0),0)$ given by
\begin{equation}\label{eq:rho0}
	\rho_0(x)=\begin{cases} h(\delta(x)), & \text{if}\ x\in C_{\epsilon_0}; \\
					   h(-\epsilon_0), & 
					   \text{if}\ x\in \Omega\setminus C_{\epsilon_0}
		    \end{cases}
\end{equation}
is $m$-plurisubharmonic. Indeed, near $bC_{\epsilon_0}$ we have that 
$\rho_0=\max\{h\circ\delta, h(-\epsilon_0)\}$, and the maximum of two 
$m$-plurisubharmonic functions is $m$-plurisubharmonic 
(see \cite[Sect.\ 6]{HarveyLawson2011ALM}). 

To get a smooth $m$-plurisubharmonic function $\rho \in \Cscr^r(\Omega)$
which agrees with $\rho_0$ on a smaller collar $C_{\epsilon_1}\subset C_{\epsilon_0}$, 
we choose numbers $0<\epsilon_1<\epsilon_2<\epsilon_0$, 
pick a smooth increasing convex function $\chi:\R\to \R$ 
such that $\chi(t)$ is a negative constant for $t\le h(-\epsilon_2)$
and $\chi(t)=t$ for $t\ge h(-\epsilon_1)$, and set $\rho=\chi\circ \rho_0$. 
Clearly, $\rho$ is well-defined on $\overline \Omega$ and of class $\Cscr^r$, 
and it is constant on $\Omega\setminus C_{\epsilon_2}$.
Using the formula \eqref{eq:Hesscomp} for points in $C_{\epsilon_0}$ gives 
\[
	\Hess_{\chi \circ \rho_0} = 
	(\dot \chi \circ \rho_0) \Hess_{\rho_0} 
	+ (\ddot \chi \circ \rho) \nabla \rho_0\, \cdotp (\nabla \rho_0)^T.
\]
Since $\dot \chi \circ \rho_0\ge 0$ and $\ddot \chi \circ \rho\ge 0$, 
$\rho_0$ is $m$-plurisubharmonic on $C_{\epsilon_0}$, 
and the matrix $\nabla \rho_0\, \cdotp (\nabla \rho_0)^T$ is nonnegative definite,
we infer that $\rho=\chi \circ \rho_0$ is of class $\Cscr^r$ and $m$-plurisubharmonic 
on $\overline\Omega$. Clearly, $\rho$ satisfies the conclusion of the proposition 
for all $t\in (h(-\epsilon_1),0)$. For such $t$, we have $\chi(t)=t$ and hence the hypersurface 
$M_t=\{\rho=t\}=\{h\circ \delta=t\}$ equals 
\begin{equation}\label{eq:Mt}	
	M_t=\{\delta=\tau\}\ \ \text{for $\tau=h^{-1}(t)\in (-\epsilon_1,0)$}. 
\end{equation}
Replacing $\rho$ by $c\rho$ with $c=-1/h(-\epsilon_1)>0$, this holds for $t\in (-1,0)$.
\end{proof}

%
%

%
%
\begin{proof}[Proof of Theorem \ref{th:main}]
Since an $m$-convex domain is also $k$-convex for any $m<k\le n-1$, 
and every $k$-flat point is also $m$-flat, it suffices to show that the domain 
$\Omega$ in the theorem does not contain any parabolic minimal surfaces 
of dimension $m$.

Let $\rho:\Omega\to (-\infty,0)$ be an $m$-plurisubharmonic function given 
by Proposition \ref{prop:main}. If $f:R\to \Omega$ is an immersed 
minimal submanifold of dimension $m$, then $\rho\circ f$ is a negative function 
on $R$ which is subharmonic in the Riemannian metric induced by $f$ 
from the Euclidean metric on $\R^n$. If in addition this minimal surface is
connected and parabolic, then this function is constant. 
Thus, either $f(R) \subset M_t=\{\rho=t\}$ for some $t\in (-1,0)$, 
or $f(R) \subset \Omega\setminus C_{\epsilon_1}$ where $\epsilon_1>0$ is as in \eqref{eq:Mt} and $C_{\epsilon_1}$ is given by \eqref{eq:Cepsilon}.

Assume that the first possibility holds. The minimal $m$-dimensional submanifold 
$f(R)\subset M_t$ is clearly contained in the set of points where the hypersurface $M_t$ 
is not strongly $m$-convex. By Proposition \ref{prop:main},  
$M_t$ is $m$-flat at every point where it fails to be strongly 
$m$-convex, and the set of its $m$-flat points has bounded interior. 
Since $f(R)$ is parabolic, it cannot be contained in a bounded set, 
so this case is impossible. 

Therefore, $f(R) \subset  \Omega\setminus C_{\epsilon_1}$,
so the distance between $f(R)$ and $b\Omega$ is at least $\epsilon_1>0$. 
If we translate $\Omega$ for the distance $\epsilon_1/2$ in any direction,
the same argument (with the same constants) applies to the translated domain 
$\Omega'$ and its boundary $b\Omega'$. Since $f(R) \subset \Omega'$, 
we infer that $\dist(f(R),b\Omega')\ge \epsilon_1$. 
Since this holds for every translate $\Omega'$ of $\Omega$ for $\epsilon_1/2$, 
we conclude  that $\dist (f(R),b\Omega)\ge 3\epsilon_1/2$. 
Repeating this argument shows that the distance from $f(R)$ to $b\Omega$ 
must be infinite, a contradiction.

If $m=2$, the same argument shows that there are no nonconstant conformal 
harmonic maps $f:R\to\Omega$ (possibly with branch points) from any parabolic 
conformal surface $R$. (In this case, parabolicity does not depend on $f$ 
but only on the conformal structure of $R$.)
\end{proof}

%
%
\begin{remark}[Concerning the paper \cite{Forstneric2022BSM}] \label{rem:signeddistance}
By \cite[Theorem 1.1]{Forstneric2022BSM}, every bounded domain $\Omega$ in $\R^n$ 
$(n\ge3)$, whose boundary is of class $\Cscr^{r,\alpha}$ for some $r\ge 2$ and 
$0<\alpha\le 1$ and is $m$-convex for some $m\in \{1,\ldots,n-1\}$, admits an 
$m$-plurisubharmonic defining function of class $\Cscr^{r,\alpha}$. 
The proof in the cited paper refers to the result of Li and Nirenberg 
\cite{LiNirenberg2005} saying that the signed distance function to a hypersurface 
of class $\Cscr^{r,\alpha}$ for such $(r,\alpha)$ is of the same class $\Cscr^{r,\alpha}$ 
near the hypersurface. By using the result of Gilbarg and Trudinger 
\cite[Lemma 14.16]{GilbargTrudinger1983} 
and Krantz and Parks \cite{KrantzPark1981}, we see that 
\cite[Theorem 1.1]{Forstneric2022BSM} also holds in smoothness classes 
$\Cscr^r$ for $r=2,3,\ldots,\infty,\omega$.

The second remark is that the last statement in \cite[Theorem 1.1]{Forstneric2022BSM},
concerning strongly $m$-convex domains,  
is one of several special cases of a result by Harvey and Lawson 
\cite[Theorem 5.12]{HarveyLawson20093CPAM}.
I wish to thank the authors for the relevant communication.
\end{remark}

%
%
\section{Weakly hyperbolic domains}\label{sec:weakhyp}

As mentioned in the introduction, the second motivation for this paper comes from the 
recently introduced hyperbolicity theory for domains in $\R^n$ $(n\ge 3)$ in terms of
minimal surfaces. We recall the background.

In 2021, Forstneri\v c and Kalaj  \cite{ForstnericKalaj2021} 
defined on any domain $\Omega$ in $\R^n$ for $n\ge 3$ a Finsler pseudometric 
$g_\Omega:T\Omega= \Omega\times\R^n\to \R_+=[0,+\infty)$, 
called the {\em minimal pseudometric}, and the associated pseudodistance 
$\dist_\Omega:\Omega\times\Omega\to \R_+$ obtained by integrating $g_\Omega$, 
by using conformal harmonic discs $\D=\{z\in \C:|z|<1\}\to \Omega$, in the same way as the 
Kobayashi--Royden pseudometric and pseudodistance are defined on any
complex manifold by using holomorphic discs; see 
\cite{Kobayashi1967,Kobayashi2005,Royden1971}. We recall the definition. 
Let $\CH(\D,\Omega)$ denote the space of conformal harmonic discs $f:\D\to\Omega$,
possibly with branch points. 
Denote by $z=x+\imath y$ the complex coordinate on $\D$ and by $f_x$ 
the partial derivative of $f$ on $x$. The minimal pseudometric at $p \in\Omega$ 
on a vector $v\in \R^n$ is defined by 
\[
	g_\Omega(p,v) = 
	\inf\bigl\{1/r>0: \exists f\in\CH(\D,\Omega),\ f(0)=p,\ f_x(0)=r v\bigr\} \ge 0.
\]
It follows from definitions that $\dist_\Omega$ is the biggest pseudometric on $\Omega$ 
such that every conformal harmonic map $R\to \Omega$ from a conformal surface $R$ is 
distance decreasing with respect to the Poincar\'e pseudometric on $R$.
Thus, $\dist_\Omega$ describes the fastest possible rate of growth of hyperbolic minimal 
surfaces in $\Omega$. A domain $\Omega$ is said to be {\em hyperbolic} if 
$\dist_\Omega$ is a distance function, and {\em complete hyperbolic} 
if $(\Omega,\dist_\Omega)$ is a complete metric space. 
Every bounded domain is hyperbolic but not necessarily complete hyperbolic. 
On the unit ball of $\R^n$, the minimal metric coincides with the 
Beltrami--Cayley--Klein metric, one of the classical models of hyperbolic geometry; 
see \cite{ForstnericKalaj2021}. We refer to \cite{DrinovecForstnericPAMQ} 
for the basic hyperbolicity theory for domains in Euclidean spaces. 

We now introduce the following weaker notion of hyperbolicity, which is of interest
on unbounded domains where hyperbolicity remains poorly understood. 

%
%
\begin{definition}\label{def:whyp}
A domain $\Omega$ in $\R^n$, $n\ge 3$, is {\em weakly hyperbolic} (for minimal surfaces) 
if every conformal harmonic map $\C\to\Omega$ is constant.
\end{definition}

Weak hyperbolicity is an analogue of Brody hyperbolicity in complex analysis: 
a complex manifold $X$ is said to be {\em Brody hyperbolic} if every holomorphic map
$\C\to X$ is constant. Every Kobayashi hyperbolic manifold is clearly also Brody hyperbolic;
the converse holds on compact complex manifolds (see Brody \cite{Brody1978}) but it fails
in general (see \cite[p.\ 219]{Brody1978}). 

Likewise, every hyperbolic domain in $\R^n$ is weakly hyperbolic 
but the converse fails in general. For the first claim, 
assume that $\Omega\subset \R^n$ is not weakly hyperbolic. 
Pick a nonconstant conformal harmonic map $f:\C\to\Omega$ and a point $a\in \C$ 
where $df_a\ne 0$. The conformal harmonic discs $f_r: \D\to \Omega$ 
given by $f_r(z) = f(a+rz)$ for $r>0$ satisfy  $f_r(0)=f(a)$ and $d(f_r)_0=r df_a$. 
Letting $r\to +\infty$ we get $g_\Omega(f(a),v)=0$ for all $v\in df_a(\C)$, 
so $\Omega$ is not hyperbolic. 
%
%
The failure of the converse implication is given by part (b) of the following proposition.
Part (c) shows that a weakly hyperbolic domain may contain parabolic minimal surfaces 
whose universal conformal covering surface is the disc.

\begin{proposition}\label{prop:relationship}
\begin{enumerate}[\rm (a)]
\item There is a nonhyperbolic domain in $\R^3$ which does not contain any parabolic minimal surfaces. 
\item In particular, there is a weakly hyperbolic domain in $\R^3$ which is not hyperbolic.
\item For every parabolic domain $D\subsetneq\R^2=\C$ which omits at least two points of $\C$ 
there is a weakly hyperbolic but non\-hyperbolic domain $\Omega_D\subset\R^3$ containing 
$D\times\{0\}$ as a proper minimal surface.
\end{enumerate}
\end{proposition}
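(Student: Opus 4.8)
\textbf{Proof proposal for Proposition \ref{prop:relationship}.}

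The plan is to construct all three examples from a single flexible family of domains built around a thin tube over a horizontal slit-type region, exploiting the rigidity supplied by Theorem \ref{th:main} together with the classical fact that complete minimal surfaces of dimension $\ge 3$ in $\R^n$ are hyperbolic and that conformal harmonic maps decrease the minimal pseudometric. For (a) and (b), I would take a domain $\Omega\subset\R^3$ of the form $\Omega=\{x\in\R^3: \delta_M(x)<t_0\}$ where $M$ is a properly embedded nonflat minimal surface in $\R^3$ with bounded Gaussian curvature (for instance a catenoid), $\delta_M$ its signed distance function, and $t_0>0$ small enough that the tubular neighbourhood of $M$ of radius $t_0$ is embedded. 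By Meeks and Rosenberg \cite[Theorem 5.3]{MeeksRosenberg2008JDG}, $M$ has positive reach, so $\Omega$ is a well-defined domain whose boundary $b\Omega=\{\delta_M=t_0\}$ is a $\Cscr^2$ hypersurface that (by the curvature computation \eqref{eq:nux} in the proof of Proposition \ref{prop:main}) is minimally convex, not $2$-flat near infinity, and has positive reach. Hence Theorem \ref{th:main} applies and $\Omega$ contains no parabolic minimal surfaces, in particular no nonconstant conformal harmonic image of $\C$, so $\Omega$ is weakly hyperbolic; this gives the nonexistence half of (a). To see that $\Omega$ is not hyperbolic, I would note that $\Omega$ contains large flat pieces: near a point $p\in M$ where $M$ is close to its tangent plane, $\Omega$ contains increasingly large flat discs of radius $\to\infty$ (translates of portions of affine planes lying in the solid tube), on which the minimal pseudometric $g_\Omega$ degenerates; more robustly, one can argue that $\dist_\Omega$ vanishes between pairs of points far out along an end of $M$, since there the tube looks locally like a slab $\R^2\times(-t_0,t_0)$ of fixed width but unbounded extent, and a slab is not hyperbolic. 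This simultaneously yields (b).

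For (c), given a parabolic domain $D\subsetneq\C$ omitting at least two points, I would build $\Omega_D\subset\R^3$ so that $D\times\{0\}$ sits inside it as a properly embedded minimal surface while $\Omega_D$ remains weakly hyperbolic and nonhyperbolic. The natural choice is a tube of variable, shrinking radius over $D\times\{0\}$: let $\varphi:D\to(0,\infty)$ be a smooth positive function tending to $0$ fast enough along $bD$ (and at the omitted points and at infinity) and set $\Omega_D=\{(x_1,x_2,x_3): (x_1,x_2)\in D,\ |x_3|<\varphi(x_1,x_2)\}$, possibly after a further convexity-improving modification of the profile so that $b\Omega_D$ is minimally convex and not $2$-flat at infinity. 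Then $D\times\{0\}$ is properly embedded in $\Omega_D$ and is minimal (it is flat). Since $D$ is parabolic, its universal cover is the disc precisely when $D$ omits at least two points (by the uniformization/little Picard dichotomy), so this realises the assertion that a weakly hyperbolic domain can contain a parabolic minimal surface of hyperbolic conformal cover type. Nonhyperbolicity of $\Omega_D$ is immediate from the presence of the flat minimal surface $D\times\{0\}$ through which a disc of radius comparable to the (unbounded) extent of $D$ passes, forcing $g_{\Omega_D}$ to vanish in directions tangent to $D\times\{0\}$ at suitable points. The main work is to verify weak hyperbolicity: I would choose $\varphi$ so that $b\Omega_D$ satisfies the hypotheses of Theorem \ref{th:main} with $m=2$ — here the boundary is genuinely unbounded, so one needs the tube to thin out fast enough that the principal curvatures stay bounded (giving positive reach), the minimal-convexity inequality $\nu_1+\nu_2\ge 0$ holds, and the $2$-flat locus has bounded relative interior. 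The latter is arranged by making $\varphi$ strictly concave in a suitable sense away from a compact set, so that no region of $b\Omega_D$ is flat near infinity; then Theorem \ref{th:main} forces every conformal harmonic map $\C\to\Omega_D$ to be constant.

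The hard part will be (c): unlike (a)–(b), where the tube over $M$ inherits good geometry automatically from the reach and curvature bounds of $M$, in (c) the profile function $\varphi$ must be engineered by hand to simultaneously (i) keep $D\times\{0\}$ properly embedded, (ii) make $b\Omega_D$ a $\Cscr^2$ hypersurface of positive reach with bounded principal curvatures even though it is unbounded and degenerates toward $bD$, (iii) satisfy $\nu_1+\nu_2\ge 0$ everywhere on $b\Omega_D$, and (iv) ensure the $2$-flat locus of $b\Omega_D$ is relatively compact. Balancing (ii) and (iv) is delicate: thinning $\varphi$ too slowly risks unbounded curvature (failure of positive reach), while thinning it too aggressively or with the wrong sign of concavity can violate $\nu_1+\nu_2\ge 0$. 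I expect the clean way around this is to not insist $b\Omega_D$ be exactly the graph of $\pm\varphi$ but rather to first take any reasonable thin neighbourhood of $D\times\{0\}$ and then replace it by $\{\psi<0\}$ for a carefully mollified $2$-plurisubharmonic exhaustion-type function $\psi$ whose sublevel set contains $D\times\{0\}$ — effectively running the convexification of Proposition \ref{prop:main} in reverse — after which the curvature bounds and the structure of the flat locus can be read off directly from $\Hess\,\psi$. Once such a $\psi$ is in hand, all three conclusions follow from Theorem \ref{th:main} and the elementary degeneration of $g_\Omega$ on flat discs, with no further analytic input.
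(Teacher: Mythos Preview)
Your plan has a genuine gap that breaks all three parts. For (a)--(b), the tube $\Omega=\{|\delta_M|<t_0\}$ over a catenoid $M$ is \emph{not} minimally convex: at $x\in\{\delta_M=t_0\}$ lying over $p\in M$ with principal curvatures $\pm\nu$ ($\nu>0$), the signed distance to $b\Omega$ negative inside $\Omega$ is $\delta_M-t_0$, so by \eqref{eq:nux} the principal curvatures of $b\Omega$ at $x$ are $-\nu/(1-t_0\nu)$ and $\nu/(1+t_0\nu)$, with sum $-2t_0\nu^2/(1-t_0^2\nu^2)<0$. Hence Theorem \ref{th:main} does not apply; and indeed the tube contains $M$ itself, which is conformally $\C^*$ and therefore a parabolic minimal surface, so this $\Omega$ cannot witness (a) at all. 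For (c), invoking Theorem \ref{th:main} is self-defeating: its conclusion for $m=2$ is that $\Omega_D$ contains \emph{no} parabolic minimal surface whatsoever, directly contradicting the requirement that the parabolic surface $D\times\{0\}$ lie in $\Omega_D$. No profile $\varphi$ can make $b\Omega_D$ satisfy the hypotheses of Theorem \ref{th:main} while $D\times\{0\}\subset\Omega_D$.

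The paper bypasses Theorem \ref{th:main} entirely and uses an elementary barrier argument on the single explicit family
\[
\Omega_D=\bigl\{(x,y,z)\in\R^3:\ |z|<1,\ z^2(x^2+y^2)<1,\ (x,y)\in D\ \text{if}\ z=0\bigr\}.
\]
If $f=(f_1,f_2,f_3):R\to\Omega_D$ is conformal harmonic with $R$ parabolic, then $|f_3|<1$ forces $f_3\equiv c$; if $c\ne 0$ then $f_1^2+f_2^2<1/c^2$ forces $f_1,f_2$ constant as well, so $c=0$ and $f(R)\subset D\times\{0\}$. Taking $D$ bounded gives (a) and hence (b); for (c), a nonconstant conformal harmonic map $\C\to\Omega_D$ would land in $D\times\{0\}$ and yield a nonconstant (anti)holomorphic map $\C\to D$ omitting two points, contradicting Picard. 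Nonhyperbolicity comes from the flat discs $\{z=1/k\}\cap\Omega_D$ of radius $k$, which accumulate on $D\times\{0\}$ as $k\to\infty$ and collapse $\dist_{\Omega_D}$ between any two points there. The key mechanism you are missing is that weak hyperbolicity in (c) must be obtained \emph{without} excluding all parabolic minimal surfaces; this is achieved not by curvature conditions on $b\Omega_D$ but by the bounded coordinate function $z$, which forces parabolic images into a single leaf where Picard takes over.
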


\begin{proof}
Given a domain $D\subset\R^2$, we define the domain $\Omega_D\subset\R^3$ by 
\[
	\Omega_D = \bigl\{(x,y,z)\in\R^3: |z|<1,\ z^2(x^2+y^2)<1,\ (x,y)\in D\ \text{if}\ z=0\bigr\}.
\]
We claim that the following assertions hold.
\begin{enumerate}[\rm (i)]
\item The minimal distance in $\Omega_D$ between any pair of points $p,q\in D\times \{0\}$ vanishes.
\item The image of every nonconstant conformal harmonic map $R\to \Omega_D$ from a 
parabolic conformal surface $R$ is contained in $D\times \{0\}$. 
\end{enumerate}
To prove the first claim (i), assume that $p=(a,b,0)$ and $q=(c,d,0)$.
Set $p_k=(a,b,1/k)$ and $q_k=(c,d,1/k)$ for $k=2,3,\ldots$. 
It is obvious that $\lim_{k\to\infty} \dist_{\Omega_D}(p,p_k) =0$ and $\lim_{k\to\infty} \dist_{\Omega_D}(q,q_k) =0$.
The sequence of linear discs $\{(x,y,1/k):x^2+y^2<k^2\}\subset \Omega_D$  
shows that $\lim_{k\to\infty} \dist_{\Omega_D}(p_k,q_k) =0$. Hence, $\dist_{\Omega_D}(p,q)=0$.

To prove the second claim (ii), assume that $R$ is a parabolic conformal surface
and $f=(f_1,f_2,f_3):R\to\Omega_D$ is a nonconstant conformal harmonic map.
We have that $|f_3|<1$, so $f_3=c$ is constant. If $c\ne 0$ then 
$f_1^2+f_2^2<1/c^2<+\infty$, so $f_1$ and $f_2$ are constant as well, 
a contradiction. Thus,  $c=0$ and hence $f(R)\subset D\times \{0\}$. 

Taking $D$ to be a bounded domain in $\R^2$, property (ii) implies that $\Omega_D$
does not contain any parabolic minimal surfaces. Since $\Omega_D$ is nonhyperbolic 
by property (i), such a domain satisfies condition (a) in the proposition. 
Part (b) is an immediate consequence.

Let $D\subset\C$ be a parabolic domain which omits at least two points of $\C$.
By (ii) the image of every nonconstant conformal
harmonic map $\C\to \Omega_D$ is contained in $D\times \{0\}$, which
contradicts Picard's theorem. Thus, $\Omega_D$ is weakly hyperbolic but it contains
the parabolic minimal surface $D\times \{0\}$. This proves part (c) of the proposition.
\end{proof}

\begin{problem}
Is there a hyperbolic domain $\Omega\subset\R^3$ containing a parabolic minimal surface?
In particular, if $D\subset\R^2=\C$ is a parabolic domain which omits at least two points of $\C$, is 
the domain $D\times (-\epsilon,\epsilon)\subset \R^3$ hyperbolic for some (or all) $\epsilon>0$?
\end{problem}

Note that the domain $(\C\setminus\{0,1\})\times\D$ in $\C^2$
is Kobayashi hyperbolic. The difference between the two cases is that the 
projection of a complex curve to any factor in a product of complex manifolds is holomorphic, 
while the projection of a conformal harmonic map is harmonic but not conformal in general. 
It is easily seen that $\C\setminus\{0,1\}$ contains many harmonic images of $\C$ 
which are conformal at some point, but not everywhere.

%
%
The situation is much simpler for convex domains as shown by the following proposition.

\begin{proposition}\label{prop:convex}
For a convex domain $\Omega$ in $\R^n$, $n\ge 3$, the following are equivalent.
\begin{enumerate}[\it (a)]
\item $\Omega$ is complete hyperbolic.
\item $\Omega$ is hyperbolic.
\item $\Omega$ is weakly hyperbolic.
\item $\Omega$ does not contain any affine $2$-plane.
\item $\Omega$ does not contain any parabolic minimal surface.
\end{enumerate}
\end{proposition}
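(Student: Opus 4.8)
The plan is to establish the cycle of implications $(a)\Rightarrow(b)\Rightarrow(c)\Rightarrow(d)\Rightarrow(e)\Rightarrow(a)$, of which several arrows are either trivial or already known. The implication $(a)\Rightarrow(b)$ is immediate from the definitions. The implication $(b)\Rightarrow(c)$ was proved in the text just before the statement (a nonconstant conformal harmonic map $\C\to\Omega$ produces, by rescaling discs $f_r(z)=f(a+rz)$, a vanishing minimal metric at a point, so $\Omega$ is not hyperbolic). The implication $(c)\Rightarrow(d)$ is also easy: an affine $2$-plane $L\subset\Omega$ admits the conformal harmonic parametrization $\C\to L\subset\Omega$ by an affine isometry, which is nonconstant, contradicting weak hyperbolicity. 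Likewise $(d)\Rightarrow(e)$ should be handled by contraposition: a parabolic minimal surface in a convex domain must be a flat affine $2$-plane. Here is where convexity is used essentially --- I would argue that each coordinate function $x_i$, restricted to a minimal surface $f\colon R\to\Omega$, is harmonic on $R$, and more generally for any linear functional $\ell$ on $\R^n$ the composition $\ell\circ f$ is harmonic; if $R$ is parabolic and $f(R)$ lies in a convex domain $\Omega$, then after passing to a supporting situation one shows $\ell\circ f$ is bounded above hence constant on $R$ for suitable $\ell$, forcing $f(R)$ into an affine subspace. Iterating (or using that $f(R)$ spans an affine subspace $L$ contained in $\Omega$ and that a minimal surface in a $2$-plane is the $2$-plane itself, while a nonplanar minimal surface would have an unbounded linear functional contradicting the maximum principle on a parabolic surface) yields that $f(R)$ is an affine $2$-plane, hence $(d)$ fails.

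For the final and main implication $(e)\Rightarrow(a)$, I would argue the contrapositive: if $\Omega$ is convex but not complete hyperbolic, then $\Omega$ contains an affine $2$-plane, which is a parabolic minimal surface (it is $\C$ with the flat metric, which is parabolic), contradicting $(e)$. The geometric heart is the claim: \emph{a convex domain in $\R^n$ that contains no affine $2$-plane is complete hyperbolic}. I would prove this by a direct estimate on the minimal metric $g_\Omega$, using convexity to build large conformal harmonic (in fact affine linear) discs in directions where $\Omega$ is "wide", and conversely bounding the metric from below by the distance-to-the-boundary along lines. The key structural fact to invoke is that a convex domain containing no affine $2$-plane is, after an affine change of coordinates, of the form $\Omega = \Omega' \times \R^k$ with $\Omega'\subset\R^{n-k}$ a convex domain containing no line (i.e.\ $\Omega'$ is "line-free" in the sense that its recession cone contains no line); then $k\le 1$, since a $\R^2$ factor is an affine $2$-plane. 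When $k=0$, $\Omega$ contains no line at all, and a bounded-in-every-direction convex body is complete hyperbolic by comparison with a ball (or by the explicit Beltrami--Cayley--Klein-type estimate cited from \cite{ForstnericKalaj2021}); when $k=1$, $\Omega=\Omega'\times\R$ with $\Omega'$ line-free, and I would show $\dist_\Omega \ge \dist_{\Omega'}$ via the projection (harmonic functions compose with linear projections to harmonic functions, and conformality is not needed for the distance-decreasing direction once one has the pseudometric inequality) together with completeness of $\dist_{\Omega'}$, handling the unbounded $\R$-direction by noting translations act as isometries so it suffices to control escape in the $\Omega'$ directions.

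The step I expect to be the main obstacle is precisely this last one: proving the quantitative lower bound on $\dist_\Omega$ for a line-free convex $\Omega$, i.e.\ that $(\Omega,\dist_\Omega)$ is complete. One has to rule out Cauchy sequences in $\dist_\Omega$ converging to $b\Omega$, and the difficulty is that conformal harmonic discs are much more flexible than holomorphic discs, so the classical complex-analytic arguments (e.g.\ localization, or using a bounded plurisubharmonic exhaustion) do not transfer verbatim. The cleanest route is probably to produce a proper $m=2$-plurisubharmonic (equivalently, minimal plurisubharmonic) bounded exhaustion function on a line-free convex $\Omega$ --- for instance a suitably convexified function of the distance to $b\Omega$, or $-\log$ of a "width function" --- and then invoke the standard principle that a bounded domain (or a domain with a bounded strictly minimal-plurisubharmonic exhaustion) is complete hyperbolic for the minimal metric, as developed in \cite{DrinovecForstnericPAMQ}; for a line-free convex domain such an exhaustion exists because convexity makes the supporting half-spaces into linear, hence minimal-plurisubharmonic, barriers, and line-freeness guarantees the exhaustion is proper. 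Modulo citing these facts, the remaining arguments are routine.
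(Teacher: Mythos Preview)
Your overall logical skeleton matches the paper's, but there is a genuine error in your handling of $(d)\Rightarrow(e)$, and you spend most of your effort on a step the paper simply cites.

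For $(d)\Rightarrow(e)$ you assert, as the contrapositive, that ``a parabolic minimal surface in a convex domain must be a flat affine $2$-plane.'' This is false: the catenoid is a nonplanar parabolic minimal surface in the convex domain $\R^3$. Your iteration with supporting hyperplanes does not prove this claim either. Correctly run, the iteration shows that if $f:R\to\Omega$ is nonconstant with $R$ parabolic and $\Omega$ convex, then either one eventually reaches $f(R)\subset$ a line (forcing $f$ constant, a contradiction), or at some stage the slice $\Omega\cap L_k$ equals the entire affine subspace $L_k$ with $\dim L_k\ge 2$, so $\Omega$ contains a $2$-plane. The conclusion is that $\Omega$ contains an affine $2$-plane, not that $f(R)$ is one. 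The paper avoids this iteration altogether: it cites from \cite[proof of Theorem 5.1]{DrinovecForstnericPAMQ} that a convex domain satisfying $(d)$ lies in $\bigcap_{j=1}^{n-1}\{\ell_j<c_j\}$ with $\ell_1,\ldots,\ell_{n-1}$ linearly independent; then each $\ell_j\circ f$ is harmonic and bounded above on the parabolic surface $R$, hence constant, so $f(R)$ lies in a line and $f$ is constant. This is the direct proof of $(d)\Rightarrow(e)$, not its contrapositive.

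Your implication $(e)\Rightarrow(a)$ factors as $(e)\Rightarrow(d)\Rightarrow(a)$, exactly as in the paper, but you then try to reprove $(d)\Rightarrow(a)$, which the paper just cites from \cite[Theorem 5.1]{DrinovecForstnericPAMQ}. Your sketch of that step has gaps: a line-free convex domain is not ``bounded in every direction'' (think of the region above a paraboloid), so your $k=0$ case does not reduce to comparison with a ball; and in the $k=1$ case the projection $\Omega'\times\R\to\Omega'$ does not send conformal harmonic discs to conformal harmonic discs, so there is no obvious inequality $\dist_\Omega\ge\dist_{\Omega'}$ of minimal pseudodistances. These are not minor technicalities, and the actual proof in \cite{DrinovecForstnericPAMQ} proceeds differently. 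For the present proposition you should simply cite that result, as the paper does, and focus on the one genuinely new link $(d)\Rightarrow(e)$.
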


\begin{proof}
The implications $(a) \Rightarrow (b) \Rightarrow (c) \Rightarrow (d)$ and 
$(e)\Rightarrow (d)$ are trivial. The implication $(d)\Rightarrow (a)$ was proved in 
\cite[Theorem 5.1]{DrinovecForstnericPAMQ}, which establishes the equivalence of 
properties $(a)$--$(d)$. It remains to prove $(d)\Rightarrow (e)$. 
It was shown in \cite[proof of Theorem 5.1]{DrinovecForstnericPAMQ} 
that a convex domain $\Omega$ in $\R^n$ satisfying condition $(d)$ is contained 
in the intersection of $n-1$ halfspaces $H_j=\{\ell_j<c_j\}$ $(j=1,\ldots,n-1)$, where 
$\ell_1,\ldots,\ell_{n-1}:\R^n\to\R$ are linearly independent linear functionals and 
$c_j$ are constants. If $f:R\to \Omega$ is a conformal harmonic map from a 
connected parabolic conformal surface $R$ then $\ell_j\circ f:R\to (-\infty,c_j)$ 
is a bounded from above harmonic function for every $j=1,\ldots,n-1$,
hence constant. Thus, $f(R)$ lies in a real line and hence $f$ is constant.
\end{proof}

%
%
A property of a domain directly opposite to weak or strong hyperbolicity is {\em flexibility} 
for minimal surfaces, introduced in \cite[Definition 1.1]{DrinovecForstneric2023JMAA}. 
A domain $\Omega\subset \R^n$ for $n\ge 3$ is said to be flexible if, given an open 
conformal surface $M$, a compact set $K\subset M$ whose complement has no 
relatively compact connected components, and a conformal harmonic 
immersion $f: U\to \Omega$ from an open neighbourhood of $K$, we can approximate
$f$ uniformly on $K$ by conformal harmonic immersions $M\to\Omega$. 
A flexible domain admits many conformal harmonic images of $\C$,
so it is not weakly hyperbolic. 
In particular, the domains in Theorem \ref{th:main} for $m=2$ are not flexible.
A halfspace is neither (weakly) hyperbolic nor flexible.

It is not surprising that the situation regarding Corollary \ref{cor:main3} is quite 
different in $\R^4$. As shown in \cite[Example 1.9]{DrinovecForstneric2023JMAA}, 
a domain in $\R^4$ with coordinates $(x_1,x_2,x_3,x_4)$, given by
\[
	\Omega=\{x_4 > -a|x_2| + b|x_3| \ \ \ \text{for some $a>0$ and $b\in\R$}\},
\]
is flexible. Taking $b>0$, the complementary domain 
$\Omega'=\R^4\setminus \overline{\Omega}$ 
is of the same type with the reversed roles of $x_2$ and $x_3$, and 
with $x_4$ replaced by $-x_4$, so it is also flexible. By 
Alarc\'on and L\'opez \cite{AlarconLopez2012JDG} each of these domains contains 
a properly immersed conformal minimal surface parameterised by an arbitrary 
open Riemann surface. (In fact, their result holds for any concave wedge in $\R^3$ 
obtained by intersecting $\Omega$ with the hyperplane $x_3=0$.) 
This gives many pairs of disjoint properly immersed minimal surfaces 
in $\R^4$ of any given conformal type. There also exist pairs of disjoint catenoids 
in $\R^4$ whose ends are asymptotic to a pair of orthogonal $2$-planes in $\R^4$, 
so their closures in $\RP^4$ are disjoint as well. On the other hand, a pair of complex 
algebraic curves in $\C^2$ intersect, or their closures in $\CP^2$ intersect at infinity. 
(Note that every complex curve is also a minimal surface.)

%
%
%
%
\subsection*{Acknowledgements}
I thank Antonio Alarc\'on and Joaqu\'{\i}n P\'erez for information 
on properly embedded minimal surfaces with bounded curvature in $\R^3$, 
and for general advice. I also thank G.\ Pacelli Bessa and Leandro F.\ Pessoa for communication regarding their paper \cite{BessaJorgePessoa2021} and 
the paper by Gama et al.\ \cite{GamaLiraMariMedeiros2022}.
Finally, my sincere thanks go to an anonymous referee for the comments and 
suggestions which led to an improved presentation.



\end{document}